\newtheorem{theorem}{Theorem}[section]
\newtheorem{proposition}[theorem]{Proposition}
\newtheorem{lemma}[theorem]{Lemma}
\newtheorem{corollary}[theorem]{Corollary}
\newtheorem{example}[theorem]{Example}
\newtheorem{fact}[theorem]{Fact}
\def\R{\mathbb{R}}
\author[M. B\'ona]{Mikl\'os  B\'ona}
\title[$k$-Protected Vertices in Binary Search Trees]{$k$-Protected vertices in Binary Search Trees}
\address{\rm M. B\'ona, Department of Mathematics, 
University of Florida,
358 Little Hall, 
PO Box 118105, 
Gainesville, FL 32611--8105 (USA)
}
\date{\today}
\begin{document}

\begin{abstract}
We show that for every $k$, the probability that a randomly selected vertex of 
a random binary search tree on $n$ nodes is at distance $k-1$ from the closest leaf converges to
a rational constant $c_k$ as $n$ goes to infinity. 
\end{abstract}

\maketitle

\section{Introduction}
\subsection{2-Protected vertices in trees}  A {\em 2-protected vertex}  in a rooted tree is a vertex that
is not a leaf and is not adjacent to a leaf. In social networks, protected vertices may represent 
participants who have, in the past, invited others to join the network, but have not recently done that. 
This, and other applications led to a recent flurry of interest in studying 2-protected vertices in 
various kinds of rooted trees. See the articles \cite{cheon}, \cite{du} and \cite{ward} for some results.

\subsection{Vertices at level $k$} We generalize the notion of 2-protected vertices as follows. 
In a rooted tree, we say that vertex $v$ is {\em at level} $k$, or is $(k-1)$-{\em protected} if the shortest path from $v$ to any leaf
of the tree consists of $k-1$ edges. In other words, the distance between $v$ and the closest leaf
is $k-1$. So leaves are at level 1, neighbors of leaves are at level two,
and so  on. In particular, 2-protected vertices are those that are at level 3 or higher. 

In this paper, we will  study the numbers of vertices at level $k$ in {\em binary search trees}, which are sometimes also called {\em decreasing binary
trees}, and which are in one-to-one correspondence with permutations as explained below. 

Let $p=p_1p_2\cdots p_n$ be a permutation. The {\em binary search tree} of $p$, which
we denote by $T(p)$, is defined as follows. The root of $T(p)$ is a vertex labeled $n$, the largest entry of $p$. 
If $a$ is the largest entry of $p$ on the left of $n$, and $b$ is the largest
entry of $p$ on the right of $n$, then the root will have two children,
the left one will be labeled $a$, and the right one labeled $b$. If $n$ is
the first (resp. last) entry of $p$, then the root will have only one child,
and that is a left  (resp. right) child, and it will necessarily be
labeled $n-1$ as $n-1$ must be the largest of all remaining elements.
Define the rest of $T(p)$ recursively,  by taking $T(p')$ and $T(p'')$, where
$p'$ and $p''$ are the substrings of $p$ on the two sides of $n$, and affixing
them to $a$ and $b$.

Note that $T(p)$ is indeed
a binary tree, that is, each vertex has 0, 1, or 2 children. Also note  that each child
 is a left child or a right child of its
parent, even if that child is an only child.  Given $T(p)$, 
we can easily recover $p$ by reading $T$ according to the tree
traversal method called {\em in-order}.
In other words,
first we read the left subtree of $T(p)$, then the root, and then the right
subtree of $T(p)$. We read the subtrees according to this very same rule.
See Figure \ref{permtree} for an illustration. 

\begin{figure} \label{permtree}
\begin{center}
  \includegraphics[width=60mm]{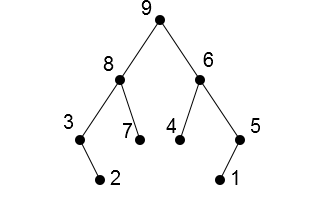}
  \caption{The tree $T(p)$ for $p=328794615$.}
  \end{center}
 \end{figure}
 
 Because of this one-to-one correspondence between permutations and binary search trees,
 in our discussion, we will use these two kinds of objects interchangeably. 
 
 As a warmup, we try a simple probabilistic approach, which will only be successful in the cases of 
 $k=1$ and $k=2$. In the case of general $k$, it will provide only a rough lower bound, but
 that lower bound will be useful in the following section. In that section, we use an analytic
 approach which, in theory, provides the exact form of the exponential generating function
 $A_k(x)$ of the total number of vertices at level $k$ in all binary search trees. In practice, 
 these generating functions will have a large number of summands. However, we will be able
 to describe them in sufficient precision to find the growth rate of their coefficients. 
 
\section{Warm-up: A  Probabilistic Approach}
\subsection{Two Simple Initial Cases}
In this section, we enumerate vertices on levels one and two. It turns out that these
cases are much simpler than the general case, and do not necessitate the general method
that we will use in later sections. 

In order to alleviate notation, let us agree that for the rest of this paper, all permutations are 
of length $n$. Let $X(p)$ denote the number of leaves in the tree $T(p)$, and let
$E(X)$ denote the expectation of $X(p)$ taken over all permutations $p$ of length $n$. 

\begin{proposition} \label{level1} For all  integers $n\geq 2$, the equality $E(X)=\frac{n+1}{3}$ holds.
\end{proposition}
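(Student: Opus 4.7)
The plan is to compute $E(X)$ by linearity of expectation, after first converting the question about tree leaves into a question about a local pattern in the permutation. First I would establish the following combinatorial characterization: under the in-order correspondence between positions of $p = p_1p_2\cdots p_n$ and vertices of $T(p)$, position $i$ is a leaf of $T(p)$ if and only if $p_i < p_{i-1}$ and $p_i < p_{i+1}$, where we adopt the convention $p_0 = p_{n+1} = +\infty$. This follows directly from the recursive construction of $T(p)$: the entry $p_i$ is chosen as the root of a subtree whose in-order interval is exactly $\{i\}$ (equivalently, $p_i$ is a leaf) precisely when the nearest entry larger than $p_i$ to its left sits at position $i-1$, and the nearest entry larger than $p_i$ to its right sits at position $i+1$.

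Next, for each $1 \leq i \leq n$, let $I_i$ be the indicator of the event that position $i$ is a leaf, so that $X(p) = \sum_{i=1}^n I_i$ and $E(X) = \sum_{i=1}^n \Pr(I_i = 1)$ over a uniformly random permutation. For an interior position $2 \leq i \leq n-1$, the event $I_i = 1$ says that $p_i$ is the smallest of the three distinct values $p_{i-1}, p_i, p_{i+1}$; by symmetry of the uniform distribution this has probability exactly $1/3$. For the boundary positions $i = 1$ and $i = n$, only one actual comparison is needed ($p_1 < p_2$ and $p_{n-1} > p_n$ respectively), and each of those events has probability $1/2$.

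Summing the contributions yields
\[
E(X) = 2\cdot \frac{1}{2} + (n-2)\cdot\frac{1}{3} = 1 + \frac{n-2}{3} = \frac{n+1}{3},
\]
which is the claimed identity; the assumption $n \geq 2$ is exactly what is needed for the two boundary positions to exist and be distinct. The only genuine content in this argument is the leaf characterization of the first paragraph; once that is in hand, the computation is a three-line application of linearity of expectation, and I expect no further obstacles.
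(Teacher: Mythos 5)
Your proposal is correct and follows essentially the same route as the paper: characterize leaves as positions where $p_i$ is smaller than both neighbors (probability $1/3$ for interior positions, $1/2$ at the two ends) and apply linearity of expectation. The only difference is cosmetic --- you spell out the leaf characterization slightly more explicitly than the paper, which simply asserts it can be proved by induction on $n$.
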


\begin{proof} Let $p=p_1p_2\cdots p_n$.  Let $2\leq i\leq n-1$. Then it is straightforward to prove,
for instance by induction on $n$, that the vertex corresponding to  $p_i$ is a leaf if and only
if it is smaller than both of its neighbors, and that event  has probability $1/3$. On the other hand, 
if $i\in\{1,n\}$, then $p_i$ is a leaf if and only if it is smaller than its only neighbor, an event of
probability $1/2$, Therefore, if we denote by $X_i(p)$ the indicator variable of the
event  that $p_i$ is a leaf, then 
by linearity of expectation we get
\[E(X)=\sum_{i=1}^n E(X_i) =(n-2)\cdot \frac{1}{3} + 2\cdot \frac{1}{2}=\frac{n+1}{3}.\]
\end{proof}

It is perhaps a little bit surprising that the formula for entries at level two is just as simple
as the formula proved in Proposition \ref{level1}. Let $Y(p)$ denote the number of vertices
of $p$ that are at level two. 

\begin{theorem} \label{level2}
Let $n\geq 4$. Then the equality $E(Y)=\frac{3(n+1)}{10}$ holds. 
\end{theorem}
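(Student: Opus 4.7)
The plan is to follow the same strategy as in the proof of Proposition \ref{level1}: apply linearity of expectation by writing $Y = \sum_{i=1}^n Y_i$, where $Y_i(p)$ is the indicator that $p_i$ sits at level 2 in $T(p)$, and compute $\Pr(Y_i = 1)$ from the local structure of $p$ at position $i$. Since having a leaf child automatically forces non-leaf status, $\{Y_i = 1\}$ is just the event that $p_i$ has at least one leaf child in $T(p)$.

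The key step will be a local characterization of ``leaf child'' in terms of the relative order of a few consecutive entries of $p$. Recall that the left subtree of $p_i$ in $T(p)$ consists of $p_\ell, \ldots, p_{i-1}$, where $[\ell, r]$ is the window of $p_i$ (the maximal interval on which $p_i$ is the maximum). Hence the left child of $p_i$ is a leaf iff that subtree is a singleton, iff $\ell = i-1$, iff $p_{i-1} < p_i$ together with the condition that either $i = 2$ or $p_{i-2} > p_i$. A symmetric statement holds for the right child. Thus $\{Y_i = 1\}$ is the union of two events each depending on at most three consecutive entries of $p$.

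With this in hand, inclusion-exclusion is mechanical. In the generic interior range $3 \le i \le n-2$, each event has probability $\frac{1}{6}$ (one of six orderings of three consecutive entries in a uniform permutation), and their intersection constrains $p_{i-2}, \ldots, p_{i+2}$ and has probability $\frac{4}{5!} = \frac{1}{30}$, giving $\Pr(Y_i = 1) = \frac{1}{6} + \frac{1}{6} - \frac{1}{30} = \frac{3}{10}$. The boundary positions $i \in \{1, n\}$ each contribute $\frac{1}{6}$, while $i \in \{2, n-1\}$ each contribute $\frac{7}{12}$, since the left-child condition at $i = 2$ simplifies to just $p_1 < p_2$, of probability $\frac{1}{2}$, and inclusion-exclusion with the right-child event (probability $\frac{1}{6}$) and their intersection (probability $\frac{1}{12}$) gives $\frac{1}{2} + \frac{1}{6} - \frac{1}{12} = \frac{7}{12}$. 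Summing,
\[
E(Y) = 2 \cdot \frac{1}{6} + 2 \cdot \frac{7}{12} + (n-4) \cdot \frac{3}{10} = \frac{3(n+1)}{10}.
\]

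The main obstacle is pinning down the local characterization of a leaf child and then correctly accounting for the four boundary positions $i \in \{1, 2, n-1, n\}$, where the generic computation breaks down; beyond that, the argument reduces to routine inclusion-exclusion on local orderings in a uniformly random permutation.
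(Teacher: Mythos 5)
Your proof is correct, and all the local probabilities check out: $\frac{1}{6}$ for each one-sided event in the interior, $\frac{4}{120}=\frac{1}{30}$ for the intersection, $\frac{1}{2},\frac{1}{6},\frac{2}{24}=\frac{1}{12}$ at $i=2$ and $i=n-1$, and $\frac{1}{6}$ at the two ends; the sum does give $\frac{3(n+1)}{10}$. Your route differs from the paper's in its bookkeeping. The paper observes that every leaf has a unique parent and that parent is at level two, so $a_{n,2}=(\text{total number of leaves})-d_n$, where $d_n$ counts vertices with \emph{two} leaf children; the first term comes for free from Proposition \ref{level1}, and only the double-leaf-parent event (your intersection $L_i\cap R_i$, with the same $\frac{1}{30}$ and $\frac{1}{12}$ probabilities) needs a fresh local computation. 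You instead run inclusion--exclusion position by position on $L_i\cup R_i$, which requires you to also characterize and compute the one-sided events $L_i$ and $R_i$ explicitly (via the window/maximal-interval description of the subtree rooted at $p_i$) and to treat four boundary positions rather than two. The paper's sieve is slightly leaner because it recycles the leaf count; your version is more self-contained and makes the local structure of ``leaf child'' explicit, which is arguably a cleaner warm-up for the harder $k\geq 3$ cases where such local characterizations are exactly what break down.
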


\begin{proof} 
Let $a_{n,2}$ be the total number of vertices in all decreasing binary trees on $n$ vertices
that are at level two. Note that if $n>1$, then each leaf must have a unique parent, and that
parent must always be a vertex at level two. However, some vertices at level two are parents of
{\em two} leaves. We will now determine the number $d_n$ of such vertices, which will
then yield a formula,
\begin{equation} \label{sieve} a_{n,2}=\frac{(n+1)!}{3} - d_n\end{equation}
for $a_{n,2}$, where $n\geq 4$. 

Let $p_i$ be a vertex that is at level two and has two leaves as children. Let us assume for now
that $3\leq i \leq n-2$ holds. Then $p_i$ is larger than both of its neighbors, and both of those 
neighbors $p_{i-1}$ and $p_{i+1}$ are leaves, so they are smaller than both of their neighbors,
meaning that $p_{i-1}<p_{i-2}$, and $p_{i+1}<p_{i+2}$. On the other hand $p_i$ must be smaller
than both of its second neighbors, otherwise its children could not be $p_{i-1}$ and $p_{i+1}$.
This means that if out of the 120 possible permutations of the mentioned five entries, 
only four are possible, since $p_i$ must be the middle one in size, its neighbors must be 
the two smallest entries, and its second neighbors must be the two largest entries. 
So if $Z_i(p)$ is the indicator variable of the
event  that $p_i$ has two leaves as children (in which case $p_i$ is necessarily at 
level two), then for $i\in [3,n-2]$, we get
$E(Z_i)=\frac{4}{120}=\frac{1}{30}$. If $i=1$ or $i=n$, then $p_i$ cannot have two children.
Finally, if $i=2$ or $i=n-1$, then an analogous argument shows that $E(Z_i)=\frac{2}{24}=
\frac{1}{12}$.
Therefore, since $Z=\sum_{i=2}^{n-1} Z_i$ denotes the number of vertices that have two leaves as children (and are therefore at level two), then by linearity of expectation we have
\[E(Z)=\sum_{i=2}^{n-1} E(Z_i)=2\cdot \frac{1}{12} + (n-4) \cdot \frac{1}{30} =\frac{n+1}{30}.\]
Therefore, $d_n=(n+1)!/30$, so formula (\ref{sieve}) implies that 
\[a_{n,2}=\frac{(n+1)!}{3} -\frac{(n+1)!}{30} =\frac{3}{10}\cdot (n+1)!,\]
which proves our claim. 
\end{proof}

As a vertex in a rooted tree is called 2-protected if it is not at level 1 or 2, we can now easily
compute the expected number $E(Prot_n)$ of protected vertices in  binary search trees of size $n$.
We recover the following result of Mark Ward and H. Mahmoud \cite{ward}.
\begin{corollary} For $n\geq 4$, the equality
\[E(Prot_n)= \frac{11n-19}{30}\]
holds.
\end{corollary}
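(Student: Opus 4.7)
The plan is to use the fact that the levels partition the vertex set, together with linearity of expectation, so that the expected number of 2-protected vertices is simply the total number of vertices minus the expected number at levels 1 and 2. Since every tree on $n$ nodes has exactly $n$ vertices, and every vertex lies at some level, we have the deterministic identity $\mathrm{Prot}_n(p) = n - X(p) - Y(p)$ for every permutation $p$ of length $n$.

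Taking expectations and substituting the values computed in Proposition \ref{level1} and Theorem \ref{level2}, I would write
\[
E(\mathrm{Prot}_n) = n - E(X) - E(Y) = n - \frac{n+1}{3} - \frac{3(n+1)}{10}.
\]
The remaining step is a routine arithmetic simplification over the common denominator $30$, yielding $\frac{30n - 10(n+1) - 9(n+1)}{30} = \frac{11n - 19}{30}$. The hypothesis $n \geq 4$ is precisely what is needed to invoke Theorem \ref{level2} (Proposition \ref{level1} only needs $n \geq 2$), so no additional care is required. There is essentially no obstacle here; the corollary is a direct bookkeeping consequence of the two preceding results.
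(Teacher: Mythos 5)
Your proof is correct and is exactly the argument the paper intends: since the levels partition the vertices, $\mathrm{Prot}_n(p)=n-X(p)-Y(p)$, and subtracting $\frac{n+1}{3}$ and $\frac{3(n+1)}{10}$ from $n$ gives $\frac{11n-19}{30}$. The paper leaves this computation implicit, but your bookkeeping, including the observation that $n\geq 4$ is needed only to invoke Theorem \ref{level2}, matches it precisely.
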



\subsection{Higher values of $k$}
If $k>2$, then finding the total number of vertices at level $k$ is significantly more complicated.
The main reason for this is that if $k>2$, then the unique parent of a vertex at level $k-1$ does not
have to be a vertex at level $k$; it can be a vertex at level $\ell$, where $1<\ell\leq k$. 
For instance, in the tree $T(p)$ shown in Figure \ref{permtree}, vertex 3 is at level two, and its
parent, vertex 8, is also at level two. 

\subsubsection{A simple, but useful Lemma}
Let $a_{n,k}$ be the total number of vertices at level $k$ in all decreasing binary trees at level 
$k$. It is then clear that $a_{n,k+1} \leq a_{n,k}$ since each vertex at level $k+1$ must have
at least one child at level $k$. While finding the exact value of $a_{n,k}$ is beyond the 
scope of this introductory section, the following lemma will turn out to be useful for us, even if
its bound is far from being optimal. 

\begin{lemma} \label{positive} For each positive integer $k$, there exists a positive constant $\gamma_k$ so that if $n$ is large enough, then 
\[\frac{a_{n,k}}{ n \cdot n!} \geq  \gamma_k.\] 
\end{lemma}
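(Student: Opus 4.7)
The plan is to use a local indicator-variable argument in the spirit of Proposition \ref{level1} and Theorem \ref{level2}. For each position $i$ let $W_i(p)$ be the indicator of the event that $p_i$ is at level exactly $k$ in $T(p)$; then $a_{n,k}/n! = \sum_{i=1}^n E(W_i)$, and it suffices to show $\sum_i E(W_i) \geq \gamma_k n$ for some $\gamma_k > 0$. The heart of the approach is to exhibit, for each fixed $k$, a local permutation pattern of constant length $L = L(k)$ whose occurrence around position $i$ forces $p_i$ to sit at level exactly $k$, regardless of the rest of $p$.

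To set up the pattern I would put $d := \max(0, k - 3)$, $D := d + k - 1$, and $L := 2^{D+1} - 1$, and let $T_0$ be the complete binary tree of depth $D$ with a distinguished vertex $v$ at depth $d$. Writing $a$ and $b$ for the numbers of in-order predecessors and successors of $v$ in $T_0$, one has $a + b + 1 = L$. The local pattern at position $i$ then asks that (a) the $L$ entries of $p$ at positions $[i - a, i + b]$ form a permutation pattern whose BST equals $T_0$, with $p_i$ occupying the in-order slot of $v$, and (b) the two ``wall'' entries at positions $i - a - 1$ and $i + b + 1$ both exceed the maximum $M$ of that window. Condition (b) forces the window to be precisely $M$'s subtree in $T(p)$, and then (a) pins down $p_i$ as the vertex $v$ in a faithful copy of $T_0$ inside $T(p)$. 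A direct inspection of $T_0$ shows that every leaf in $p_i$'s own subtree is at distance exactly $D - d = k - 1$ from $p_i$; every other leaf of $T_0$ is at distance at least $D - d + 2 = k + 1$; and every leaf of $T(p)$ outside the window is at distance at least $d + 2 \geq k - 1$ from $p_i$, because $M$'s parent in $T(p)$ is necessarily not a leaf. Consequently $p_i$ sits at level exactly $k$.

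For each interior $i$ with $a + 2 \leq i \leq n - b - 1$, the $L + 2$ relevant entries of $p$ have uniform relative order, so the pattern above occurs with probability at least $2 N_{T_0}/(L + 2)!$, where $N_{T_0} \geq 1$ is the number of permutations of $\{1,\dots,L\}$ whose BST is $T_0$, and the factor $2$ accounts for the two orderings of the walls. Linearity of expectation then gives
\[
\sum_{i=1}^n E(W_i) \;\geq\; (n - L - 1) \cdot \frac{2 N_{T_0}}{(L + 2)!},
\]
which is at least $\gamma_k n$ for any $\gamma_k \leq N_{T_0}/(L + 2)!$ once $n \geq 2(L + 1)$, proving the lemma. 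I expect the main technical point to be the need to take $d \geq k - 3$ rather than the simpler $d = 0$: without such a choice, the ``escape route'' $p_i \to \cdots \to M \to \text{outside}$ could reach an outside leaf at distance strictly less than $k - 1$ from $p_i$, which would destroy the conclusion that $p_i$ is at level exactly $k$ (rather than at some smaller level).
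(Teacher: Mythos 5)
Your argument is correct, and its skeleton is the same as the paper's proof of Lemma \ref{positive}: exhibit a local pattern of constant length whose occurrence around position $i$ forces $p_i$ to be at level $k$, observe that the pattern occurs with probability bounded below by a constant independent of $n$, and sum over $i$ by linearity of expectation. (The fact you invoke in your condition (b) --- that the subtree rooted at the maximum $M$ of a window is exactly that window precisely when both bracketing entries exceed $M$ --- is the same fact that drives Proposition \ref{perfect}.) The difference is the pattern itself. The paper uses the case $d=0$ of your construction for \emph{every} $k$: it requires only that the subtree of $T(p)$ rooted at $p_i$ be a perfect binary tree of depth $k-1$, which buys an explicit, recursively computable constant via $Q_{k+1}=Q_k^2/(2^{k+1}-1)$. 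Your extra layer --- burying $v$ at depth $d=\max(0,k-3)$ inside a complete tree of depth $d+k-1$ --- exists to prevent a leaf \emph{outside} the subtree of $p_i$ (for instance a sibling of $p_i$, which sits at graph distance $2$) from being closer to $p_i$ than $k-1$. Whether that precaution is necessary depends on how one reads the definition of level: the paper's prose (``the shortest path from $v$ to any leaf of the tree'') is the graph-distance reading, under which the paper's event does \emph{not} by itself imply that $p_i$ is at level $k$ once $k\ge 4$, so your refinement genuinely closes a gap in the published argument; on the other hand, the recursion of Lemma \ref{diffeqfora} only makes sense if level is measured to the nearest \emph{descendant} leaf, and under that reading $d=0$ already suffices and your larger pattern merely costs a smaller (but still positive, which is all the lemma needs) constant $\gamma_k$. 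Either way your proof is valid, and it is the more robust of the two.
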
 
In other words, for any fixed $k$, the probability that a randomly selected vertex of 
a randomly selected decreasing binary tree of size $n$ is at level $k$  is larger than $\gamma_k$.

Before we prove lemma \ref{positive}, we need a simple notion. 
A {\em perfect binary tree} is a binary tree in which every non-leaf vertex has two
children, and every leaf is at the same distance from the root. So a perfect binary tree in which the root is at level $\ell$ has $1+2+\cdots +2^{\ell-1}=2^{\ell}-1$ vertices.  

We will now compute the expected number of vertices $p_i$ that are at level $k$  for which the subtree rooted at $p_i$  is a perfect binary tree. The expected number of such vertices is 
obviously a lower bound for the expected number of vertices at level $k$. 

Let $Q_k$ be the probability that for a randomly selected permutation $p$ of length
$2^k-1$, the tree $T(p)$ is a perfect binary tree (disregarding the labels). It is then clear
that $Q_1=1$, and 
\begin{equation} \label{pbintree}
Q_{k+1} = \frac{1}{2^{k+1}-1} Q_k^2.
\end{equation}
So $Q_2=1/3$, and $Q_3=1/63$.  In particular, $Q_k$ is always a positive real number.

\begin{proposition} \label{perfect}
Let $p=p_1p_2\cdots p_n$ be a permutation, and let $2^{k-1}+1 \leq i\leq n-2^{k-1}$. (In other
words, $i$ is not among the smallest $2^{k-1}$ indices or the largest $2^{k-1}$ indices in $p$.)
Let $P_k$ be the probability that the vertex $p_i$ of $T(p)$  is at height $k$, and the
 subtree of $T(p)$ rooted  $p_i$
is a perfect binary tree. Then the equation
\[P_{k}=Q_k\cdot \frac{2}{(2^k+1)2^k} \]
holds for $k\geq 1$. In particular, $P_k$ is a positive real number that does not depend on $n$. 
\end{proposition}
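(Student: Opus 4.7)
The plan is to identify the subtree of $T(p)$ rooted at $p_i$ with a contiguous block of entries of $p$, and to factor the event in question into (A) a condition that isolates this block from the rest of the permutation and (B) a condition on the internal order of the block. Using the in-order description of $T(p)$, the subtree rooted at $p_i$ consists of the entries of the unique maximal contiguous subword of $p$ in which $p_i$ is the largest entry. If this subtree is a perfect binary tree and $p_i$ sits at level $k$, then it has $2^k-1$ vertices, and its two child-subtrees are perfect of equal size, so $p_i$ must lie at the exact middle of the in-order sequence. Hence the subtree must be the block $p_{i-2^{k-1}+1},\dots,p_{i+2^{k-1}-1}$, and the hypothesis $2^{k-1}+1 \leq i \leq n-2^{k-1}$ guarantees that the two flanking positions $i\pm 2^{k-1}$ also exist in $p$.

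I would then split the event into the conjunction of two conditions: (A) among the $2^k+1$ consecutive entries $p_{i-2^{k-1}},\dots,p_{i+2^{k-1}}$, the two largest values occupy the two flanking positions $i\pm 2^{k-1}$; and (B) the $2^k-1$ middle entries, read as a permutation of the set of values they contain, give rise to a perfect binary search tree. Condition (A) forces $p_i$ to be the largest value in the middle block, so $p_i$ becomes the root of the subtree spanned by that block, and simultaneously prevents the subtree of $T(p)$ rooted at $p_i$ from extending past the flanks. Condition (B), which can only hold when the maximum of the middle block sits at its center, then supplies the internal perfection.

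To compute the probability, I would use the fact that the relative order of any $2^k+1$ fixed entries of a uniformly random permutation of length $n$ is itself uniform on $(2^k+1)!$ orderings. A direct count then gives
\[
\Pr(A) \;=\; \frac{2\cdot(2^k-1)!}{(2^k+1)!} \;=\; \frac{2}{(2^k+1)\,2^k}.
\]
Conditional on $A$, the relative order of the remaining $2^k-1$ values at the middle positions is still uniform, so by the definition of $Q_k$ we obtain $\Pr(B\mid A)=Q_k$. Multiplying yields $P_k = Q_k\cdot \frac{2}{(2^k+1)\,2^k}$, as required. Positivity of $P_k$ follows from positivity of $Q_k$, which is immediate from $Q_1=1$ and the recursion (\ref{pbintree}); independence of $P_k$ from $n$ is transparent, since only the relative order of $2^k+1$ fixed neighboring entries is ever used.

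The step requiring the most care is the independence between (A) and (B): one must argue that conditioning on which two of the $2^k+1$ positions host the two largest ranks does not bias the relative order of the remaining $2^k-1$ values. This is true because a uniformly random total order on the $2^k+1$ positions factors, once we record which pair of positions carries the top two ranks, as a uniformly random total order on the remaining $2^k-1$ positions, so (A) and (B) are independent in the sense needed for the product formula above.
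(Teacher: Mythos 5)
Your proposal is correct and follows essentially the same route as the paper: the paper also factors the event into the flanking condition (the two largest of the $2^k+1$ consecutive entries occupy the two bracketing positions, with probability $\frac{2}{(2^k+1)2^k}$) and the internal condition that the middle block of $2^k-1$ entries forms a perfect binary search tree (probability $Q_k$), multiplying by independence. You simply spell out the independence and the identification of the subtree with the centered block in more detail than the paper does.
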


\begin{proof}
The subtree rooted at the vertex $p_i$ of $T(p)$ will be a perfect binary tree with its root at level
$k$ if the following two independent events occur.
\begin{enumerate}
\item The string $p_{[i,k]}$of $2^{k}-1$ consecutive entries of $p$ whose middle entry is $p_i$ 
correspond to a binary search tree that is a perfect binary tree, and 
\item all entries in $p_{[i,k]}$ are less than {\em both} entries bracketing $p_{[i,k]}$, that is, both
$p_{i-2^{k-1}}$ and $p_{i+2^{k-1}}$. 
\end{enumerate}
The first of these events occurs at probability $Q_k$, and the second one occurs at
probability $\frac{2}{(2^k+1)\cdot 2^k}$, proving our claim.
\end{proof}
So $P_1=1/3$, and $P_2=1/30$, as we computed in the proofs of Proposition \ref{level1} and
Theorem \ref{level2}.  Furthermore, 
\[P_3=Q_3\cdot \frac{2}{8\cdot 9}=\frac{1}{63}\cdot \frac{1}{36}=\frac{1}{2268}.\]
Now we are in a position to prove Lemma \ref{positive}. 

\begin{proof} (of Lemma \ref{positive}) 
Let $V_i(p)$ be the indicator random variable of the event that the subtree of $p$ that is 
rooted at $p_i$ is a perfect binary tree whose root is at level $k$. Then it follows from the 
definition of $P_k$ that
\[E(V_i(p))=P_k. \]
If $V(p)$ denotes the number of vertices of $p$ that are at level $k$ and whose subtrees are 
perfect binary trees, then the  linear property of expectation yields 
\[E(V(p))= (n-2^{k})P_k, \] since we do not allow $i$ to be among the smallest $2^{k-1}$ indices
or the among the largest $2^{k-1}$ indices. 
Therefore, the total number $a_{n,k}$  of vertices at level $k$ in all decreasing binary trees of size 
$n$
satisfies \[\frac{a_{n,k}}{n\cdot  n!} \geq \left(1-\frac{2^{k}}{n}\right)P_k \geq \frac{P_k}{2}\]
for $n\geq 2^{k+1}$. 
This completes the proof, since we can set $\gamma_k=P_k/2$. 
\end{proof}

\section{The Analytic Approach}
\subsection{A System of Differential Equations}

In order to determine the exact value of $a_{n,k}$ for $k\geq 3$, we turn to exponential generating functions. We recall the well-known fact that
the exponential generating function for the number of permutations of length $n$, and
equivalently, decreasing binary trees on $n$ vertices, is $\sum_{n\geq 0} n!\frac{x^n}{n!} =1/(1-x)$.

For $k\geq 1$, let $A_k(x)$ denote the exponential generating function of the numbers of 
all vertices at level $k$ in all decreasing binary trees of size $n$.  Let $B_k(x)$
denote the exponential generating function for such trees in which the root is at level $k$.
In both $A_k(x)$, and $B_k(x)$, we set the constant term to 0. 
Note that $A_k(x)=\sum_{n\geq 1} \frac{a_{n,k}}{n!}x^n$, so in particular, the coefficient of
$x^n$ in $A_k$ is the {\em expected number} of vertices at level $k$ in a randomly selected
decreasing binary tree of size $n$.

Then the following differential equations hold. 

\begin{lemma} \label{lemmaforb}
We have $B_1(x)=x$, and 
\[B_k'(x)=2B_{k-1}(x)\cdot \left (\frac{1}{1-x} - B_1(x)-B_2(x)-\cdots -B_{k-2}(x) \right) -B_{k-1}(x)^2\]
if $k>1$.
\end{lemma}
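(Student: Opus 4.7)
The plan is to exploit the standard recursive decomposition of a decreasing binary tree on $n+1$ vertices: the root carries the label $n+1$ and the remaining $n$ labels are split between a (possibly empty) left subtree $T_L$ and a (possibly empty) right subtree $T_R$. This is precisely the combinatorial meaning of the identity $\bigl(1/(1-x)\bigr)' = \bigl(1/(1-x)\bigr)^2$, and the same decomposition applies to any subclass of trees with EGF $B(x)$: the derivative $B'(x)$ enumerates the pairs $(T_L, T_R)$ that can occur under the subclass restriction, each weighted in the usual EGF way.

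The base case $B_1(x)=x$ is immediate, since the only tree whose root is a leaf is the one-vertex tree.

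For $k\geq 2$, I would compute $B_{\geq k}'(x)$ first, where $B_{\geq k}(x) := \sum_{j\geq k} B_j(x)$. The key observation is that a vertex is at level $\geq m$ iff it has at least one child and every existing child is at level $\geq m-1$. Hence the root of $T$ is at level $\geq k$ iff $T$ is not a single vertex and each (possibly empty) subtree is either empty or has root at level $\geq k-1$. Introducing
\[
C_m(x) := 1 + B_{\geq m}(x) = \frac{1}{1-x} - \sum_{i=1}^{m-1} B_i(x),
\]
the EGF for trees that are empty or have root at level $\geq m$, the EGF for ordered pairs $(T_L, T_R)$ each lying in this class is $C_{k-1}(x)^2$. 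Subtracting the unique ``both empty'' contribution (the one-vertex tree, supplying the constant $1$) gives
\[
B_{\geq k}'(x) = C_{k-1}(x)^2 - 1 \qquad (k \geq 2).
\]

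The remainder is algebra. Subtracting consecutive instances yields
\[
B_k'(x) = B_{\geq k}'(x) - B_{\geq k+1}'(x) = C_{k-1}(x)^2 - C_k(x)^2,
\]
and since $C_{k-1} - C_k = B_{k-1}$ and $C_{k-1} + C_k = 2C_{k-1} - B_{k-1}$, the difference of squares factors as
\[
B_k'(x) = B_{k-1}(x) \bigl(2 C_{k-1}(x) - B_{k-1}(x)\bigr) = 2 B_{k-1}(x) \left( \frac{1}{1-x} - \sum_{i=1}^{k-2} B_i(x) \right) - B_{k-1}(x)^2,
\]
which is the identity claimed (with the convention that the empty sum at $k=2$ is zero, consistent with $C_1(x) = 1/(1-x)$).

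The main obstacle is the combinatorial translation in the middle step: the condition ``root at level $\geq k$'' must be restated as a product condition on the two (possibly empty) subtrees that correctly separates the empty-subtree case from the leaf case. Once the identity $B_{\geq k}' = C_{k-1}^2 - 1$ is established, the rest is routine algebraic manipulation, and even the $k=2$ boundary case falls out cleanly from the empty-sum convention.
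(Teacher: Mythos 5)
Your proof is correct, and it rests on the same underlying step as the paper's: delete the root and apply the Product Formula for exponential generating functions to the resulting ordered pair of subtrees. The difference is in how the level condition is bookkept. The paper counts trees rooted at level exactly $k$ directly: one subtree must be rooted at level exactly $k-1$ (giving the factor $2B_{k-1}(x)$ against the series $\frac{1}{1-x}-B_1(x)-\cdots-B_{k-2}(x)$ for subtrees that are empty or rooted at level at least $k-1$), and the term $-B_{k-1}(x)^2$ repairs the double count when both subtrees are rooted at level exactly $k-1$. You instead pass to the cumulative series $B_{\geq k}$, where the condition ``every nonempty subtree is rooted at level $\geq k-1$, and not both subtrees are empty'' becomes the single clean identity $B_{\geq k}'=C_{k-1}^2-1$ with no inclusion--exclusion, and you then recover the stated recurrence by differencing consecutive instances and factoring $C_{k-1}^2-C_k^2$ as a difference of squares. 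Both arguments rely on the same two facts you make explicit: that $1/(1-x)=1+\sum_{j\geq 1}B_j(x)$, so that $C_m(x)=\frac{1}{1-x}-\sum_{i=1}^{m-1}B_i(x)$ is the EGF for empty-or-level-$\geq m$ subtrees, and that the root's level is one more than the minimum level over its nonempty subtrees. Your version trades the paper's small double-counting correction for a short algebraic computation, and it yields the intermediate formula for $B_{\geq k}'$ as a free by-product; the two proofs are otherwise equivalent.
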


\begin{proof}
Let $T$ be a binary search tree counted by $B_k(x)$. Let us remove the root of $T$. On the one hand, this yields
a structure counted by $B_k'(x)$. On the other hand, this yields an ordered pair of binary search
trees such that one of them has its root at level $k-1$, and the other one has its root 
at level $k-1$ or higher. By the Product Formula for exponential generating functions (see for 
instance, Chapter 8 of \cite{walk}), such pairs
are counted by the first product on the right-hand side.  At the end of the right-hand side, we must subtract $B_{k-1}(x)^2$
as ordered pairs in which {\em both} trees have their root at level $k-1$ are double-counted by
the preceding term.
\end{proof}

\begin{example} \label{exB2} Let $k=2$. Then Lemma \ref{lemmaforb} yields
\begin{eqnarray*} B_2'(x) & = & 2B_1(x)\cdot \left(\frac{1}{1-x}\right) - B_1(x)^2 \\
& = & \frac{2x}{1-x} - x^2.
\end{eqnarray*}
Therefore, using the equality $B_2(0)=0$, we deduce that 
\[B_2(x)=2\ln \left(\frac{1}{1-x}\right)  - 2x -\frac{x^3}{3} .\]
\end{example}

\begin{lemma}  \label{diffeqfora} For $k\geq 1$, the linear differential equation 
\[A_k'(x)=\frac{2}{1-x}\cdot A_k(x) + B_k'(x)\]
holds. 
\end{lemma}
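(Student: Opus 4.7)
The plan is to prove this identity by a combinatorial decomposition analogous to the one used for Lemma \ref{lemmaforb}. Recall that $[x^n/n!]A_k(x) = a_{n,k}$ counts pairs $(T,v)$ in which $T$ is a decreasing binary tree on $[n]$ and $v$ is a vertex of $T$ at level $k$; consequently $[x^n/n!]A_k'(x) = a_{n+1,k}$ counts such pairs when $T$ has size $n+1$.

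I would take such a pair $(T,v)$ on $n+1$ vertices and strip off the root of $T$ (which is labeled $n+1$), obtaining an ordered pair $(L,R)$ of decreasing binary trees whose vertex-label sets partition $[n]$. The marked vertex $v$ must lie in exactly one of three places: it is the removed root, it lies in $L$, or it lies in $R$. If $v$ is the root, then $T$ is itself a tree enumerated by $B_k(x)$, contributing $b_{n+1,k} = [x^n/n!]B_k'(x)$. If $v$ lies in $L$, then, because the level of a vertex depends only on the subtree hanging below that vertex, $v$ still has level $k$ when viewed as a vertex of $L$; thus $(L,v)$ is an object counted by $A_k(x)$, while $R$ is an unrestricted decreasing binary tree enumerated by $\tfrac{1}{1-x}$. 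By the Product Formula for exponential generating functions, this case contributes $A_k(x)\cdot \tfrac{1}{1-x}$, and the symmetric case $v\in R$ contributes the same amount.

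Summing the three contributions yields
\[A_k'(x) = B_k'(x) + \frac{2}{1-x}A_k(x),\]
which is the desired identity.

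There is no serious obstacle in this argument. The only delicate point is the invariance of ``level $k$'' under restriction to the subtree of the marked vertex, which one must verify in order to recognize $(L,v)$ as a structure counted by $A_k(x)$. This invariance is forced by the definition, since the distance from $v$ to its nearest leaf only involves descendants of $v$; it is the same convention already implicit in the recursion for $B_k$ from Lemma \ref{lemmaforb}.
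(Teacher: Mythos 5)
Your proposal is correct and follows essentially the same decomposition as the paper's proof: remove the root, split according to whether the marked level-$k$ vertex is the root (giving $B_k'(x)$) or lies in one of the two subtrees (giving $\frac{2}{1-x}A_k(x)$ by the Product Formula). Your explicit remark that the level of a vertex is preserved when passing to the subtree containing it is a worthwhile clarification that the paper leaves implicit.
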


\begin{proof}
Let $(T,v)$ be an ordered pair so that $T$ is a binary search  tree on $n$ vertices,
and $v$ is a vertex of $T$ that is at level $k$. Now remove the root of $T$. If the root was $v$
itself, then we get a structure counted by $B_k'(x)$, just as we did in the proof of Lemma
\ref{lemmaforb}. Otherwise, we get an ordered pair $(R,S)$ of structures, one of which is
a binary search tree, and the other one of which is an ordered pair of a  binary search tree
and a vertex of that tree that is at level $k$. This explains the first summand of the right-hand side
by the Product Formula. 
\end{proof}

\begin{example}
Setting $k=2$, we see that $A_2(x)$ is the unique solution of the linear differential equation
\[A_2'(x)=\frac{2}{1-x}\cdot A_2(x) +   \frac{2x}{1-x} - x^2-2\]
with initial condition $A_2(0)=0$. This yields
\[A_2(x)=\frac{-\frac{1}{5}x^5+\frac{1}{2}x^4-x^3+x^2}{(1-x)^2}.\]
\end{example}

\section{A class of functions, and needed facts about integration}
In this section, we define a class of functions that will be useful to describe our results.

\subsection{A class of functions}
Let ${\bf PL}(x)$ be the class of functions $f:\R\rightarrow \R$ which are of the form 
\begin{equation} \label{classdef} 
f(x)=\sum_{i=1}^{m}  a_i  (1-x)^{b_i} \ln \left ( \frac{1}{1-x} \right)^{c_i},\end{equation}
where  the coefficients $a_i$ are  rational numbers, while the exponents $b_i$ and $c_i$ are non-negative integers. Roughly speaking, ${\bf PL}(x)$ is the class of functions that are "polynomials
in $1-x$ and $\ln \left ( \frac{1}{1-x} \right)$". 


A few facts about ${\bf PL}(x)$ that are straightforward to prove using integration by parts will 
be useful in the next section. We do not want to break the course of the discussion for such
technicalities, and therefore we will present them in the Appendix. 

\subsection{The general form of $A_k(x)$ and $B_k(x)$}

Now we are in a position to determine the general form of $A_k(x)$ and $B_k(x)$ with sufficient
precision to  deduce the asymptotic number of all entries at level $k$ in all permutations of length 
$n$. 
We start with $B_k(x)$.

\begin{lemma} \label{Bisgood}
For all $k\geq 1$, we have
\[B_k(x)\in {\bf PL}(x). \]
\end{lemma}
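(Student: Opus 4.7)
I would argue by strong induction on $k$, using the recursion of Lemma \ref{lemmaforb} at the inductive step. The base case $k=1$ is immediate: $B_1(x) = x = 1\cdot(1-x)^0 - 1\cdot(1-x)^1$ displays $B_1$ explicitly as a rational combination of terms of the shape in (\ref{classdef}) with $b_i\in\{0,1\}$ and $c_i=0$, so $B_1\in\mathbf{PL}(x)$.

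For the inductive step, assume $B_j\in\mathbf{PL}(x)$ for every $1\le j\le k-1$. Integrating Lemma \ref{lemmaforb} from $0$ to $x$ and using $B_k(0)=0$, I get
\begin{equation*}
B_k(x) = 2\int_0^x \frac{B_{k-1}(t)}{1-t}\,dt \;-\; 2\sum_{j=1}^{k-2}\int_0^x B_{k-1}(t)B_j(t)\,dt \;-\; \int_0^x B_{k-1}(t)^2\,dt.
\end{equation*}
I then invoke three closure properties of $\mathbf{PL}(x)$ that I would collect in the Appendix. First, $\mathbf{PL}(x)$ is closed under products, since
\[(1-x)^{b_1}\ln^{c_1}(1/(1-x))\cdot(1-x)^{b_2}\ln^{c_2}(1/(1-x)) = (1-x)^{b_1+b_2}\ln^{c_1+c_2}(1/(1-x)).\]
Second, if $f\in\mathbf{PL}(x)$, then $\int_0^x f(t)\,dt\in\mathbf{PL}(x)$; this follows from integration by parts with $u=\ln^c(1/(1-t))$ and $dv=(1-t)^b\,dt$ (legitimate because $b+1\ge 1$), followed by an induction on the log-exponent $c$. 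Third, if $f\in\mathbf{PL}(x)$, then also $\int_0^x f(t)/(1-t)\,dt\in\mathbf{PL}(x)$. Granting these, the products $B_{k-1}B_j$ and $B_{k-1}^2$ lie in $\mathbf{PL}(x)$ by the first property, and their antiderivatives lie in $\mathbf{PL}(x)$ by the second; the remaining integral is handled by the third property applied to $f=B_{k-1}$. Summing shows $B_k\in\mathbf{PL}(x)$, completing the induction.

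The main obstacle is the third closure property, because plain multiplication by $1/(1-t)$ takes us out of $\mathbf{PL}(x)$ by producing a $(1-t)^{-1}$ factor. The rescue is to split $B_{k-1}$ by its $(1-t)$-exponent: a summand of the form $a(1-t)^0\ln^c(1/(1-t))$ becomes $a\ln^c(1/(1-t))/(1-t)$, which integrates exactly to $a\,\ln^{c+1}(1/(1-t))/(c+1)$, landing back in $\mathbf{PL}(x)$; a summand with $b\ge 1$ yields $a(1-t)^{b-1}\ln^c(1/(1-t))$, which has non-negative $(1-t)$-exponent and integrates into $\mathbf{PL}(x)$ by the second closure property. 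Once the Appendix records these bookkeeping facts, together with the observation that all boundary evaluations at $t=0$ produce only rational constants (which themselves lie in $\mathbf{PL}(x)$ as $(1-x)^0\ln^0$), the induction itself is entirely routine.
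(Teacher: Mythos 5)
Your proposal is correct and follows essentially the same route as the paper: induction on $k$ via Lemma \ref{lemmaforb}, with the product terms handled by closure of ${\bf PL}(x)$ under multiplication and integration (the paper's Proposition \ref{closed}), and the problematic $B_{k-1}(x)/(1-x)$ term split by $(1-x)$-exponent so that the $b=0$ summands integrate to $\ln^{c+1}(1/(1-x))/(c+1)$ (the paper's Fact \ref{easy}). Your write-up is somewhat more explicit about the bookkeeping, but the argument is the same.
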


\begin{proof}
We prove the statement by induction on $k$. It is obvious that $B_1(x)=x$, and we saw in Example
\ref{exB2} that $B_1(x)=x$ and $B_2(x)=2\ln \left(\frac{1}{1-x}\right)  - 2x -\frac{x^3}{3} $. So the statement is true for $k=1$ and $k=2$. Now let us assume that the statement of 
the lemma holds for all positive integers less than $k$. It then follows from Lemma \ref{lemmaforb}
that the summands of $B_k'(x)$ are all in ${\bf PL}(x)$, except possibly some summands of the form 
$a_i \cdot \frac{1}{1-x} \cdot  \ln  \left(\frac{1}{1-x}\right)^{c_i} $, where $a_i$ is a rational number and
$c_i$ is a non-negative integer. The integral of each such  summand
is in ${\bf PL}(x)$ by Fact \ref{easy}, and integrals of the other summands (those that
are in ${\bf PL}(x)$) are in ${\bf PL}(x)$ by Proposition \ref{closed}. Therefore, as
${\bf PL}(x)$ is closed under addition, our claim is proved. 
\end{proof}

While the power series $A_k(x)$ are in general not in ${\bf PL}(x)$, the following
weaker statement does hold for them. 

\begin{theorem} \label{formofA} For all positive integers $k$, we have 
\begin{equation} \label{eqfora} A_k(x)=\frac{p_k(x)}{(1-x)^2} +f(x),\end{equation}
where $f(x) \in {\bf PL}(x)$, and $p_k(x)$ is a polynomial function with rational coefficients that is not divisible
by $(1-x)$.
\end{theorem}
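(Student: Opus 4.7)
The plan is to solve the linear ODE of Lemma \ref{diffeqfora} explicitly and to combine this with Lemma \ref{Bisgood} and the ${\bf PL}(x)$ integration facts from the Appendix. Rewriting Lemma \ref{diffeqfora} as $A_k'(x) - \frac{2}{1-x}A_k(x) = B_k'(x)$, one sees that $(1-x)^2$ is an integrating factor, so that $\frac{d}{dx}\bigl[(1-x)^2 A_k(x)\bigr] = (1-x)^2 B_k'(x)$. The initial condition $A_k(0)=0$ then yields the closed form
\[A_k(x) \;=\; \frac{1}{(1-x)^2}\int_0^x (1-t)^2 B_k'(t)\,dt.\]

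Next I would analyze this integral. By Lemma \ref{Bisgood}, $B_k \in {\bf PL}(x)$, and differentiating a typical summand $(1-x)^b \ln(1/(1-x))^c$ produces terms of the form $(1-x)^{b-1}\ln(1/(1-x))^{c'}$ with $b \geq 0$. Consequently, multiplying $B_k'(x)$ by $(1-x)^2$ raises every $(1-x)$-exponent to at least $1$, so that $(1-x)^2 B_k'(x)$ lies in ${\bf PL}(x)$ with each summand of the form $(1-x)^e \ln(1/(1-x))^c$, $e \geq 1$. The integration facts of the Appendix (essentially integration by parts) antidifferentiate each such summand into a sum $\sum_{j=0}^c \alpha_j (1-x)^{e+1}\ln(1/(1-x))^{j}$, with $e+1 \geq 2$. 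Evaluating from $0$ to $x$, and noting that $\ln(1/(1-0)) = 0$ kills every $j \geq 1$ boundary contribution, one concludes that $H(x):=\int_0^x (1-t)^2 B_k'(t)\,dt$ equals a rational constant $C$ plus a ${\bf PL}(x)$ remainder each of whose summands has $(1-x)$-exponent at least $2$.

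Dividing by $(1-x)^2$ then gives $A_k(x) = \frac{C}{(1-x)^2} + g(x)$ with $g \in {\bf PL}(x)$, since dividing an exponent-$\geq 2$ summand by $(1-x)^2$ leaves a non-negative exponent. Splitting $g(x) = q(x) + f(x)$ into its log-free part $q$ (a polynomial in $x$) and its log-containing part $f$, and absorbing $q$ into the polar part via $\frac{C}{(1-x)^2}+q(x) = \frac{C+(1-x)^2 q(x)}{(1-x)^2}$, produces the desired decomposition $A_k(x) = \frac{p_k(x)}{(1-x)^2} + f(x)$ with $p_k(x) := C + (1-x)^2 q(x)$, so that $p_k(1) = C$. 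The main obstacle is showing $C \neq 0$, and this is where Lemma \ref{positive} enters. Extracting coefficients yields $a_{n,k}/n! = C(n+1) + [x^n] f(x) + O(1)$, and a direct expansion shows $[x^n] f(x) = o(n)$ for any $f \in {\bf PL}(x)$ with non-negative exponents (the dominant singular contribution is of the form $\ln(1/(1-x))^c$, whose $n$-th coefficient is $O((\log n)^{c-1}/n)$). Comparing with the lower bound $a_{n,k}/n! \geq \gamma_k n$ from Lemma \ref{positive} forces $C \geq \gamma_k > 0$, so $p_k(1) \neq 0$ and $p_k(x)$ is not divisible by $(1-x)$, completing the proof.
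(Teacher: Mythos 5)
Your proof is correct and takes essentially the same route as the paper's: solve the first-order ODE of Lemma \ref{diffeqfora} with integrating factor $(1-x)^2$, observe that every summand of $(1-x)^2B_k'(x)$ has $(1-x)$-exponent at least $1$, and use the Appendix integration facts (Proposition \ref{refined}) to split the antiderivative into a polynomial plus $(1-x)^2$ times a ${\bf PL}(x)$ function. The only difference is that you prove the non-divisibility of $p_k(x)$ by $(1-x)$ inside the proof, via Lemma \ref{positive} and the $o(n)$ coefficient bound for ${\bf PL}(x)$ terms, whereas the paper defers that identical argument to the proof of Theorem \ref{main}; your version is the more self-contained of the two.
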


\begin{proof}
Lemma \ref{diffeqfora} provides a linear differential equation for $A_k(x)$. Solving that equation, we
get \begin{equation}
A_k(x)=\frac{\int B_k'(x) (1-x)^2 \ dx}{(1-x)^2} + \frac{C}{(1-x)^2},
\end{equation}
where the integral on the right-hand side is meant with 0 as constant term. 

We saw in the proof of Lemma \ref{Bisgood} that the summands of $B_k'(x)$ are all in 
${\bf PL}(x)$,  except possibly some summands of the form 
$a_i \cdot \frac{1}{1-x} \cdot  \ln  \left(\frac{1}{1-x}\right)^{c_i} $. Therefore, the summands of 
$(1-x)^2B_k'(x)$ are all in ${\bf PL}(x)$. 
Even more strongly, each summand of $(1-x)^2B_k'(x)$ is of the form 
$a_i(1-x)^{b_i}\ln \left( \frac{1}{1-x} \right)^{c_i}$, with $b_i \geq 1$. Therefore, Proposition
\ref{refined} implies that the integral of each summand is of the form 
$(1-x)^{b_i+1}g_i(x) + p_{\langle i\rangle}(x)$, where $g_i(x)\in {\bf PL}(x)$, and
 $p_{\langle i\rangle}(x)$ is a polynomial 
function with rational coefficients. As $b_i+1 \geq 2$, this implies that $\int B_k'(x) (1-x)^2 \ dx =(1-x)^2g(x)+ q_k(x)$, 
where $g(x)\in {\bf PL}(x)$ and $q_k(x)$ is a polynomial function with rational coefficients, and our claim is proved.  
\end{proof}

In other words, though $A_k(x)$ contains terms in which $(1-x)^2$ is in the denominator, those
terms are simply rational functions; they do not contain logarithms. 
This is important since the coefficients of the power series \[\frac{\ln(1/(1-x))}{(1-x)^2}\]
grow faster than those of the terms that occur in $A_k(x)$. (In fact, their growth is faster than
linear.)

We are now ready to state and prove the main result of this paper. 

\begin{theorem} \label{main}
Let $k\geq 1$, and let $a_{n,k}$ be the number of all vertices at level $k$ in all 
binary search trees on $n$ vertices. Then there exists a rational constant $c_k$
so that \[\lim_{n\rightarrow \infty} \frac{a_{n,k}}{(n+1)!} =c_k .\]
\end{theorem}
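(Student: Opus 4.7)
The plan is to read the limit directly off the decomposition supplied by Theorem~\ref{formofA}. Since $A_k(x)=\sum_{n\ge 1}(a_{n,k}/n!)x^n$, I rewrite
\[
\frac{a_{n,k}}{(n+1)!}=\frac{[x^n]A_k(x)}{n+1},
\]
so it is enough to show $[x^n]A_k(x)=c_k(n+1)+o(n)$ for a rational $c_k$. Using Theorem~\ref{formofA}, I would split $A_k(x)=p_k(x)/(1-x)^2+f(x)$ with $f\in{\bf PL}(x)$ and $p_k$ a polynomial with rational coefficients not divisible by $(1-x)$, and analyze the two summands separately.

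For the rational part, writing $p_k(x)=\sum_{j=0}^d \alpha_j x^j$ and using $1/(1-x)^2=\sum_{m\ge 0}(m+1)x^m$, a direct convolution gives, for $n\ge d$,
\[
[x^n]\frac{p_k(x)}{(1-x)^2}=\sum_{j=0}^d \alpha_j(n-j+1)=p_k(1)\cdot(n+1)-p_k'(1).
\]
Dividing by $n+1$ tends to the rational number $p_k(1)$, which is nonzero by the assumption that $p_k(x)$ is not divisible by $(1-x)$.

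For the ${\bf PL}$-part, every summand of $f(x)$ has the form $a_i(1-x)^{b_i}\ln(1/(1-x))^{c_i}$ with non-negative integers $b_i,c_i$. When $c_i=0$ the summand is a polynomial and contributes nothing for $n$ large. When $c_i\ge 1$, the standard identity $\ln(1/(1-x))^{c}/c!=\sum_{n\ge c}(|s(n,c)|/n!)\,x^n$ combined with the classical estimate $|s(n,c)|=O(n!(\log n)^{c-1})$ on unsigned Stirling numbers of the first kind gives $[x^n]\ln(1/(1-x))^{c_i}=O((\log n)^{c_i-1}/n)$; multiplying by the polynomial $(1-x)^{b_i}$ preserves this bound since it only forms a finite linear combination of shifted coefficients. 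Summing the finitely many summands of $f(x)$ yields $[x^n]f(x)=O((\log n)^C/n)$ for some constant $C$, which is $o(n)$ and therefore vanishes after dividing by $n+1$.

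Combining the two estimates, $a_{n,k}/(n+1)!\to p_k(1)$, so I would set $c_k:=p_k(1)\in\mathbb{Q}$ (consistent with, and bounded below by, the strictly positive lower bound guaranteed by Lemma~\ref{positive}). The main obstacle is really just the sub-linear coefficient bound on ${\bf PL}$-summands; if one prefers not to invoke Stirling asymptotics, the same estimate follows at once from the Flajolet--Sedgewick transfer theorem, since each $\ln(1/(1-x))^c$ is $\Delta$-analytic with its unique singularity at $x=1$. Everything else is a brief algebraic manipulation.
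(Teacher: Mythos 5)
Your proposal is correct and follows essentially the same route as the paper: split $A_k(x)$ via Theorem~\ref{formofA}, show the ${\bf PL}(x)$ part contributes $o(n)$ to $[x^n]A_k(x)$ using log-power coefficient asymptotics, and extract the linear term from $p_k(x)/(1-x)^2$. The only cosmetic difference is that you identify $c_k=p_k(1)$ by direct convolution, while the paper divides $p_k(x)$ by $(1-x)^2$ and reads off $c_k=a+b$ from the remainder $ax+b$ --- the same number.
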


\begin{proof}
Let $[x^n]H(x)$ denote the coefficient of $x^n$ in the power series $H(x)$. 
Formula (\ref{eqfora}) shows the general form of $A_k(x)$.
The second summand on the right-hand side of (\ref{eqfora}) is a function 
$f\in {\bf PL}(x)$. Each summand of $f$ is of the form $a_i(1-x)^{b_i} \left(\ln \frac{1}{1-x}\right)^{c_i} $,
where, crucially, $b_i\geq 0$ and $c_i\geq 0$, while the $a_i$ are rational numbers. 

 It is proved in Theorem VI.2. of 
{\em Analytic Combinatorics} \cite{analco}, in particular in formula (27) on page 386, that 
if $b_i\geq 0$ and $c_i> 0$, then 
\begin{equation} \label{tflaj}
[z^n] \left ((1-x)^{b_i} \ln \left(\frac{1}{1-x}\right)^{c_i}\right)\sim n^{-b_i-1}\sum_{j\geq 0} \frac{F_j (\ln n)}{n^j} ,\end{equation}
where the $F_j$ are constants. 

In particular, in each summand of $f(x)$, the coefficient of $x^n$ is less than $K(\ln n)/n$ for some constant $K$, and as such, 
it is  negligibly small compared to $n$.   Therefore,  the contribution of $f(x)$ to   $[x^n]A_k(x)$ is negligible,
since we know from Lemma \ref{positive} that $[x^n]A_k(x) \geq \gamma_k \cdot n$ for
a positive constant $\gamma_k$. 

Now we turn to  the first summand of formula (\ref{eqfora}) for $A_k(x)$. 
This summand, $\frac{p_k(x)}{(1-x)^2}$ is simply a rational function. Its numerator, 
$p_k(x)$ cannot be divisible by $(1-x)$, since that would imply that the coefficients 
of $x^n$ in $\frac{p_k(x)}{(1-x)^2}$ are all smaller than a constant. (See for instance
Theorem IV.9 in \cite{analco}.) That would be a contradiction since we know from 
Lemma \ref{positive} that $[x^n]A_k(x)\geq \gamma_k n$, and the result of the previous 
paragraph implies that that means $[x^n]\frac{p_k(x)}{(1-x)^2} \geq \gamma_k' n$.

So  the rational function $\frac{p_k(x)}{(1-x)^2}$ has a pole of order two at 1. It is now routine
to prove (see again Theorem IV.9 in \cite{analco} or the discussion that follows here) that 
\begin{equation} \label{firstterm}
[x^n]\frac{p_k(x)}{(1-x)^2}\sim c_k (n+1) \end{equation} for some constant $c_k$.
As we have seen that the contribution of $f(x)$ in  (\ref{eqfora}) is insignificant,  (\ref{eqfora}) and (\ref{firstterm}) together imply that
\[[x^n]A_k(x) \sim [x^n]\frac{p_k(x)}{(1-x)^2}\sim c_k (n+1).\]
Finally, we prove that $c_k$ is rational. In order to see this, note that if $n$ is large enough
then \[[x^n]\frac{p_k(x)}{(1-x)^2} =[x^n]\frac{ax+b}{(1-x)^2},\]
where $ax+b$ is the remainder obtained when $p_k(x)$ is divided by $(1-x)^2$. As $p_k$ has
rational coefficients, both $a$ and $b$ are rational numbers. 
However, 
\[[x^n]\frac{ax+b}{(1-x)^2}=[x^n]\left(\frac{a+b}{(1-x)^2} - \frac{a}{(1-x)}\right) =(a+b)(n+1)-a,\]
so $c_k=a+b$, which is a rational number. 
\end{proof}

\section{Examples}
\subsection{The case of $k=3$.}
Determining the value of $c_3$ requires finding $B_3'(x)$ first. We can do that by
using Lemma \ref{lemmaforb}, since $B_1(x)$ and $B_2(x)$ have already been computed
in Lemma \ref{lemmaforb} and Example \ref{exB2}. A routine computation that we carried out using Maple leads
to 
\begin{eqnarray} \label{eqb3}
B_3'(x) & = & 4\frac{\ln(1/(1-x))}{1-x}+4x\ln(1/(1-x))-\frac{2}{3}\frac{x^3}{1-x}-\frac{2}{3}x^4 \\
& - &
4\frac{x}{1-x}  - 4\ln(1/(1-x))^2  +
 \frac{4x^3}{3}\ln(1/(1-x))-\frac{x^6}{9}.
\end{eqnarray}

Now we can solve the differential equation provided by Lemma \ref{diffeqfora} with $k=3$, to get
\begin{eqnarray*}
A_3(x) & = & \frac{1721}{8100(1-x)^2)}-\frac{x^7}{81}+\frac{x^6}{324}-\frac{5x^5}{54}+
\frac{2x^4}{9}\ln(1/(1-x))  + \frac{23x^4}{324} \\ 
& - & \frac{4x^3}{45}\ln(1/(1-x))+
\frac{349x^3}{2025} +\frac{14x^2}{15}\ln(1/(1-x))+\frac{979x^2}{2700} \\
& - & \frac{8x}{5}\ln(1/(1-x)) 
+ \frac{4219x}{4050}-\frac{4x}{3}\ln(1/(1-x))^2 \\ 
 & + & \frac{4}{3}\ln(1/(1-x))^2-\frac{1721}{8100} 
-\frac{22}{15}\ln(1/(1-x)).
\end{eqnarray*}

It is now clear, by the proof of Theorem \ref{main} that 
\[c_3=\lim_{n\rightarrow \infty} \frac{[x^n]A_3(x)}{(n+1)!}=\frac{1721}{8100}\sim 0.2124691358.\]

\subsection{The case of $k=4$.}
Determining the value of $c_4$ is conceptually the same as determining $c_3$.
However, the computation becomes much more cumbersome. Lemma \ref{lemmaforb} provides
a formula for $B_3'(x)$ as a sum. According to Maple, that sum has 52 summands of the
form $a_ix^{b_i}\left(\ln(1/(1-x))\right)^{c_i}$. Using that expression for $B_3'(x)$, we can compute
$A_4(x)$ using Lemma \ref{diffeqfora}. Maple obtains a solution that has 59 summands. 
However, only 17 of these 59 summands contribute to $p_4(x)$, and therefore, only these
17 summands influence $c_4$. 
The value we obtain for $c_4$ is 
\[c_4=\frac{250488312501647783}{2294809143026400000}=0.1091543117.\]

\section{Further Directions}
The data that we computed, $c_1=1/3$, $c_2=0.3$, $c_3=0.212$, and $c_4=0.109$ suggest
that the sequence $c_1,c_2,\cdots $ is log-concave. Is that indeed the case, and if so,  is there a 
combinatorial proof? We point out that it is not true that for any fixed $n$, the 
sequence $a_{n,1}, a_{n,2},\cdots ,a_{n,n}$ is log-concave. For instance, $n=4$ provides a
counterexample. However, it can still be the case that for every $k$, there exists a threshold
$N(k)$ so that if $n>N(k)$, then the sequence $a_{n,1},a_{n,2},\cdots ,a_{n,k}$ is log-concave. 

\section{Appendix:Needed facts about integration}
\begin{proposition} \label{closed}
The class ${\bf PL}(x)$ is closed under integration with respect to $x$. 
\end{proposition}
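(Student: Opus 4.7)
The plan is to reduce to a single monomial of the form $(1-x)^b L^c$, where I write $L = \ln(1/(1-x))$, and then prove by induction on $c$ (with $b$ allowed to be any non-negative integer) that its antiderivative lies in ${\bf PL}(x)$. Linearity of integration handles the fact that a general element of ${\bf PL}(x)$ is a finite rational-linear combination of such monomials, and rational scalar multiples of ${\bf PL}(x)$-functions stay in ${\bf PL}(x)$ by inspection of the definition in equation (\ref{classdef}).

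For the base case $c=0$, I simply note that $\int (1-x)^b\,dx = -\frac{1}{b+1}(1-x)^{b+1}$, which is legal because $b\geq 0$ forces $b+1\neq 0$, and the result is again of the shape $a(1-x)^{b'}L^{0}$ with $b'\geq 1$. For the inductive step $c\geq 1$, integration by parts with $u = L^c$ and $dv = (1-x)^b\,dx$ (so $du = c L^{c-1}(1-x)^{-1}\,dx$ and $v = -(1-x)^{b+1}/(b+1)$) gives
\[
\int (1-x)^b L^c\,dx = -\frac{(1-x)^{b+1}}{b+1}\,L^c + \frac{c}{b+1}\int (1-x)^b L^{c-1}\,dx.
\]
The boundary term $-\frac{1}{b+1}(1-x)^{b+1}L^c$ is already a monomial of the required form (again using $b+1\neq 0$), while the remaining integral has strictly smaller $c$, so by the inductive hypothesis it lies in ${\bf PL}(x)$. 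Adding a rational multiple of a ${\bf PL}(x)$-function to a ${\bf PL}(x)$-monomial keeps us in ${\bf PL}(x)$, closing the induction.

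The only place where anything subtle could go wrong is the denominator $b+1$ in the integration by parts, i.e.\ what happens when $(1-x)^{-1}$ appears. Fortunately, the definition of ${\bf PL}(x)$ restricts $b_i$ to non-negative integers, so $b+1\geq 1$ throughout, and we never face the integral of $L^c/(1-x)$ inside the inductive argument. This is precisely the reason Lemma \ref{Bisgood} has to peel off the $\frac{1}{1-x}L^{c_i}$ summands of $B_k'(x)$ and treat them separately via Fact \ref{easy}: those summands have $b_i=-1$ and lie outside ${\bf PL}(x)$, so Proposition \ref{closed} is not expected to cover them. Within the stated class, however, the induction above runs without obstruction, proving the proposition.
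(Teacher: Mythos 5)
Your proof is correct and follows essentially the same route as the paper's: reduce to a single monomial $(1-x)^b \ln\left(\frac{1}{1-x}\right)^c$, induct on $c$, and use integration by parts to drop the logarithmic exponent by one. Your added remark about why $b+1\neq 0$ and the role of Fact \ref{easy} for the $b=-1$ case is a useful clarification but does not change the argument.
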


\begin{proof}
We need to show that $\int (1-x)^{b} \ln \left ( \frac{1}{1-x} \right)^{c} \ dx \in {\bf PL}(x)$.
We prove this by induction on $c$, the inital case of $c=0$ being obvious. 
Integration by parts yields
\begin{eqnarray}  \label{partint} 
\int (1-x)^{b} \ln \left ( \frac{1}{1-x} \right)^{c} \ dx & = & - \ln \left ( \frac{1}{1-x} \right)^{c}  \cdot 
\frac{(1-x)^{b+1}}{b+1}  \\ & + & \int \frac{(1-x)^{b}}{b+1} \cdot c \ln \left ( \frac{1}{1-x} \right)^{c-1} \ dx . \end{eqnarray}
By the induction hypothesis, the integral on the right-hand side is in ${\bf PL}(x)$, proving our claim. 
\end{proof}

A special case of the previous proposition will be particularly useful for us. 
\begin{proposition} \label{refined}
Let  $b\geq 0$ and $c\geq 0$ be integers. Then 
\[\int (1-x)^{b} \ln \left ( \frac{1}{1-x} \right)^{c} \ dx = (1-x)^{b+1} \cdot g(x) + p(x) ,\]
where $p$ is a polynomial function, and $g(x)\in {\bf PL}(x)$. The integral on the
left-hand side is taken with constant term 0. 
\end{proposition}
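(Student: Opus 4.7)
The plan is straightforward induction on $c$, reusing the integration-by-parts identity (\ref{partint}) from the proof of Proposition \ref{closed}, while being careful about the constant of integration that enforces the ``constant term $0$'' convention.

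For the base case $c=0$, I would compute directly: the antiderivative of $(1-x)^b$ with constant term $0$ is $\frac{1}{b+1} - \frac{(1-x)^{b+1}}{b+1}$, which already has the required shape, with $g(x) = -\frac{1}{b+1}$ (a rational constant, hence a member of $\mathbf{PL}(x)$ corresponding to $b_i = c_i = 0$) and $p(x) = \frac{1}{b+1}$ a constant polynomial.

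For the inductive step $c \geq 1$, I would apply (\ref{partint}) to write
\[
\int (1-x)^b \ln\bigl(1/(1-x)\bigr)^c\,dx
= -\frac{(1-x)^{b+1}}{b+1}\ln\bigl(1/(1-x)\bigr)^c
+ \frac{c}{b+1} \int (1-x)^b \ln\bigl(1/(1-x)\bigr)^{c-1}\,dx.
\]
The boundary term is already of the form $(1-x)^{b+1} h(x)$ with $h(x) = -\frac{1}{b+1}\ln(1/(1-x))^c \in \mathbf{PL}(x)$, and it vanishes at $x = 0$ because $c \geq 1$ forces $\ln(1)^c = 0$. By the inductive hypothesis applied at the same $b$ and exponent $c-1$, the remaining integral equals $(1-x)^{b+1}\tilde g(x) + \tilde p(x)$ with $\tilde g \in \mathbf{PL}(x)$ and $\tilde p$ a polynomial. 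Setting $g(x) = h(x) + \tfrac{c}{b+1}\tilde g(x)$ and $p(x) = \tfrac{c}{b+1}\tilde p(x)$ yields the desired decomposition, using that $\mathbf{PL}(x)$ is closed under addition.

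The one delicate point, and the main bookkeeping issue I expect, is ensuring that the factor $(1-x)^{b+1}$ really does isolate the non-polynomial part --- that is, that no stray summand of the form $(1-x)^j \ln(1/(1-x))^{\geq 1}$ with $j < b+1$ sneaks in. This is automatic here because integration by parts lowers $c$ but leaves $b$ untouched, so the inductive hypothesis applies at the \emph{same} exponent $b$, and the factor $(1-x)^{b+1}$ propagates through. The constant-term-$0$ normalization is likewise preserved, since the boundary term vanishes at $x = 0$ when $c \geq 1$, and the inductive antiderivative already satisfies the convention.
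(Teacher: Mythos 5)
Your proof is correct and follows essentially the same route as the paper: induction on $c$ with a direct computation for the base case $c=0$ and the integration-by-parts identity (\ref{partint}) for the inductive step, noting that the boundary term carries the factor $(1-x)^{b+1}$ and that $b$ is unchanged so the inductive hypothesis applies. Your treatment is in fact slightly more careful than the paper's about the constant-term-$0$ normalization in the base case.
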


\begin{proof}
Induction on $c$, the initial case being  that of $c=0$. If $b=0$, then the statement is true, 
since $\int 1 \ dx = x= (1-x) \cdot (-1) + 1$. If $b>0$, then the statement is true, since
$\int (1-x)^b \ dx = (1-x)^{b+1} \cdot \frac {-1}{n+1}$. 
 
 The induction step directly follows
from (\ref{partint}) and from the induction hypothesis.
\end{proof}
We will also need the following. 

\begin{fact} \label{easy}
For all non-negative integers $c$, the equality 
\[ \int \frac{1}{1-x} \cdot \ln \left ( \frac{1}{1-x} \right)^{c} dx =  \frac{1}{c+1} \ln \left ( \frac{1}{1-x} \right)^{c+1} +C\]
holds.  
\end{fact}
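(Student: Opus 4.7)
The statement is an elementary antiderivative identity, so the plan is simply to verify it by one of the two standard methods: a substitution or direct differentiation of the right-hand side. I will favor the substitution proof since it makes the structural reason transparent and avoids any case analysis on $c$.

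The plan is to apply the substitution $u = \ln\!\left(\frac{1}{1-x}\right) = -\ln(1-x)$. A quick chain-rule computation gives
\[
\frac{du}{dx} = \frac{1}{1-x},
\]
so $du = \frac{dx}{1-x}$, and the integrand $\frac{1}{1-x}\ln\!\left(\frac{1}{1-x}\right)^{c}$ transforms exactly into $u^{c}\,du$. Then, since $c$ is a non-negative integer, the power rule yields
\[
\int u^{c}\,du = \frac{u^{c+1}}{c+1} + C,
\]
and back-substituting $u = \ln(1/(1-x))$ recovers the right-hand side of the claimed identity.

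As a second, independent check I would differentiate the proposed antiderivative directly. Using the chain rule on $\frac{1}{c+1}\ln(1/(1-x))^{c+1}$, the outer differentiation produces a factor of $\ln(1/(1-x))^{c}$ (the $c+1$ cancels), and the derivative of $\ln(1/(1-x))$ is $\frac{1}{1-x}$, reproducing the integrand. This confirms the identity on the natural domain $x < 1$ where $\ln(1/(1-x))$ is real-analytic.

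There is no genuine obstacle here; the identity is a routine consequence of the fact that $\frac{1}{1-x}$ is exactly the derivative of $\ln(1/(1-x))$, which is why iterated powers of the logarithm integrate cleanly against this weight. The only minor point worth noting is the trivial base case $c=0$, where the identity reduces to the familiar $\int \frac{dx}{1-x} = \ln(1/(1-x)) + C$, and this is already built into the substitution argument.
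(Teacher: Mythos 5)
Your proof is correct. The paper actually states Fact \ref{easy} without any proof at all, remarking only that the appendix facts are ``straightforward to prove using integration by parts''; your substitution $u = \ln\left(\frac{1}{1-x}\right)$ is, if anything, the more natural route here, since it reduces the identity to the power rule in one step with no induction on $c$ and no parts computation, and your differentiation cross-check independently confirms it. Note that integration by parts is genuinely needed for the paper's other appendix results (Propositions \ref{closed} and \ref{refined}, where $(1-x)^b$ with $b \geq 0$ appears and the exponent of the logarithm must be reduced inductively), but for this special case where the weight $\frac{1}{1-x}$ is exactly the derivative of $\ln\left(\frac{1}{1-x}\right)$, your one-line substitution is the right tool. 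No gaps.
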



\begin{thebibliography}{99}
\bibitem{analco} P. Flajolet, R. Sedgewick, Analytic Combinatorics, Cambridge University Press, 2009. 
\bibitem{walk} M. B\'ona, A Walk Through Combinatorics, 3rd edition, World Scientific, 2011.
\bibitem{cheon} G.-S. Cheon, L.W. Shapiro
Protected points in ordered trees,
{\em Applied Mathematics Letters}, {\bf 21} (2008), pp. 516Ð520.
\bibitem{du}  R.R. Du, H. Prodinger
Notes on protected nodes in digital search trees
{\em Applied Mathematics Letters}, {\bf 25} (2012), pp. 1025Ð1028.
\bibitem{ward} H. M. Mahmoud, M. D. Ward, Asymptotic distribution of two-protected nodes in random binary search trees, {\em Applied Mathematics Letters}, {\bf 25} (2012), pp. 2218-2222.
\end{thebibliography}
\end{document}